\theoremstyle{plain}
\newtheorem{teo}{Theorem}[section]
\newtheorem{cor}[teo]{Corollary}
\newtheorem{prop}[teo]{Proposition}
\theoremstyle{definition}
\theoremstyle{remark}
\def\@setcopyright{}
\def\serieslogo@{}
\newcommand{\XX}{\mathcal{X}}
\newcommand{\OS}{\mathcal O_{\!S}}
\newcommand{\OSU}{\mathcal{O}_{\!S}^{\:\!*}}
\newcommand{\PP}[1]{\mathbb P_{#1}}
\newcommand{\funz}[5]{\begin{array}{cccc}#1\colon&#2&\longrightarrow&#3\\
 &#4&\longmapsto&#5\end{array}}
\begin{document}

\begin{center}
{\LARGE Integral points on the complement\\ of the branch locus of projections from hypersurfaces.

}\vspace{.8cm}

\textsc{Andrea Ciappi}

(Universit\`a di Pisa, Italy)

\vspace{1.5cm}

\thispagestyle{empty}

\begin{minipage}[b]{.8\textwidth}

\begin{footnotesize}\textsc{Abstract. }We study the integral points on $\PP n\setminus D$, where $D$ is the branch locus of a projection from an hypersurface in $\PP{n+1}$ to a hyperplane $H\simeq\PP n$. In doing that we follow the approach proposed in a paper by Zannier but we prove a more general result that also gives a sharper bound that may lead to prove the finiteness of integral points and has more applications. The proofs we present in this paper are effective and they provide a way to actually construct a set containing all the integral points in question. Our results find a concrete application to Diophantine equations, more specifically to the problem of finding integral solutions to equations $F(x_0,\dots,x_n)=c$, where $c$ is a given nonzero value and $F$ is a homogeneous form defining the branch locus $D$.
\end{footnotesize}
\end{minipage}

\end{center}\vspace{.5cm}

\noindent\emph{Keywords:} diophantine equations in many variables; varieties over global fields; diophantine geometry\vspace{.3cm}

\noindent\emph{MSC2010:} Primary 11D72; Secondary 11G35, 11G99\vspace{.5cm}

\section{Introduction}

The study of integral points\footnote{For the definition of integral points (and the concept of sets of $S$-integral points and quasi-$S$-integral points) we rely on \cite{ser} and \cite{zzz}} on varieties defined in a projective space as the complement of certain divisors is related to several Diophantine problems and it is a recurring and interesting problem in number theory.

Given an affine variety $V\subset\PP n$, we can consider its closure $\overline V$ in $\PP n$ and its divisor at infinity $D=\overline V\setminus V$. Many valuable thorems about integral points on $V$ have been proved in the last century, but the majority of them require the splitting of the divisor $D$ in several components in order to be applied. There is a standard technique to bypass this requirement that consists in lifting integral points by means of a finite cover of the variety $\overline V$ unramified except possibly above points in $D$ and such that the pull-back of $D$ has more components than $D$ itself. However, this method seldom apply if $\dim V>1$ because, in general, the pull-back of $D$ does not split as desired.

A remarkable exception to this is a result by Faltings, who proved the finiteness of integral points on the complements of certain irreducible singular curves in $\PP 2$. In this case, the divisor $D$ is the branch locus of a suitable projection from a smooth surface described in detail in the original paper \cite{fal}. The problem was also studied by Zannier who proved a similar result in \cite{zan} applying arithmetic considerations from \cite{cz} (and hence ultimately relying on the Schmidt's Subspace Theorem) to the same geometric setting introduced by Faltings. Zannier obtained the same conclusions under different hypotheses and, moreover, he proved that the fact that the projected surface has non-negative Kodaira dimension is a sufficient condition for the finiteness of integral points on $\PP 2\setminus D$. Later, both results were improved by Levin in \cite{lev}, where the theorem is proved even for surfaces with negative Kodaira dimension.

In \cite{zan} Faltings' principle is also applied to the simpler case of a projection taken from an hypersurface in $\PP{n+1}$ and it leads to a bound for the dimension of any set of integral points on the complement in $\PP n$ of the branch locus of the projection. The analysis presented here will be similar but more general, as we will make no assumption on the projection. This will require some more care but it will also lead to stronger conclusions and more applications.

The geometric setting of the problem is described in detail in the second section of this paper along with the statement of our main result, proved in the third section, while the last part contains some remarks and corollaries. On a final note, we observe that the results we present (which are effective, see Section 4) have a concrete application to the study of Diophantine equations $F(x_0,\dots,x_n)=c$ for certain homogeneous irreducible forms $F$ and non-zero values $c$ (see Proposition \ref{xyz}).\vspace{.5cm}

\section{Setting of the problem}

Let $k$ be a number field and $S$ a finite set of places of $k$ which includes all the infinite ones. Let $\XX$ and $H$ be, respectively, an irreducible hypersurface of degree $m>1$ and a hyperplane in the projective space $\PP{n+1}$, both defined over $k$. Let $Q$ be a point in $\PP{n+1}\setminus H$ and consider the projection of $\XX$ from the point $Q$ to $H$; we shall denote by $\phi$ the projection and by $D\subset H\simeq\PP{n}$ its branch locus. 

Without any loss of generality we suppose $Q=(0:\ldots:0:1)$ and that $H$ is defined by $X_{n+1}=0$. The projection $\phi$, takes then the form
\[
\funz{\phi}{\XX}{\PP{n}}{(x_0:\ldots:x_n:x_{n+1})}{(x_0:\ldots:x_n).}
\]
If $Q\in\XX$ then $\phi(Q)$ is not defined unless we consider a blow-up. However, for our purposes, it will suffice to consider the restriction $\phi_{|\XX\setminus Q}$ which, with a slight abuse of notation, will still be denoted by $\phi$.

Let $f\in k[X_0,\ldots,X_{n+1}]$ be a homogeneous irreducible polynomial of degree $m$ defining $\XX$; we may view it as a univariate polynomial in $X_{n+1}$ with coefficients in $k[X_0,\ldots,X_n]$
$$f(X_0,\ldots,X_n,X_{n+1})=\sum_{l=0}^d f_l(X_0,\ldots,X_n)X_{n+1}^{d-l},$$
where $d=\deg_{X_{n+1}}f$ is the greatest integer such that the coefficient of $X_{n+1}^d$ is not identically zero and every $f_l$ is a homogeneous polynomial of degree $m-d+l$ (or the null polynomial). We remark that the geometrical request $Q\notin\XX$ implies $d=m$, $\textrm{deg }\!\!\: f_l=l$ and $f_0\in k^*$ (this being the case discussed in \cite{zan}).

We consider the discriminant of $f$ in respect of $X_{n+1}$, a polynomial in\linebreak $X_0,\ldots,X_n$ that we shall denote by $\Delta=\Delta(X_0,\ldots,X_n)$. Its zeroes are exactly the ramification points of $\phi$, insofar as $Q$ does not belong to $\XX$, and in this case $\Delta =0$ is the defining equation for $D$. On the other hand, if $Q\in\XX$, there are points in $\PP{n}$ where the polynomial $f_0$ vanishes: they may or may not belong to $\phi(\XX)$ or to $\{\Delta =0\}$, but their preimages under $\phi$ surely have cardinality less than $d$. Hence the branch locus $D$ is defined as the union of the zero loci of $f_0$ and $\Delta$. 

We also consider a set $T$ composed by the points $(x_0:\ldots:x_n)\in\PP{n}$ such that $f(x_0,\ldots,x_n,X)$ has exactly one root or none at all. If, for example, we require one root with multiplicity $d$, we must have $f(x_0,\dots,x_n)\neq 0$ and we look for a factorization
\[
\sum_{l=0}^d f_l(x_0,\dots,x_n)X^{d-l}=f_0(x_0,\dots,x_n)\cdot(X-\alpha)^d
\]
where $\alpha=\alpha(x_0,\dots,x_n)\in\bar{k}$ and we turn it in $d$ equations
\[
f_l(x_0,\dots,x_n)=f_0(x_0,\dots,x_n)\cdot{d\choose l}(-\alpha)^{\, l}\qquad l=1,\ldots,d.
\]
In particular, we must have $f_1\:\!=\:\!-d\alpha f_0$ or, equivalently, $-\alpha\:\!=\:\! f_1/(d\:\! f_0)$. This leads to the following relations among the polynomials:
\begin{equation}\label{t0}
\begin{split}
& f_0\neq 0\\
& f_l={d \choose l}\frac{f_1^{\, l}}{d^{\, l}f_0^{\, l-1}}\qquad\forall\ l=2,\ldots,d.
\end{split}
\end{equation}
We denote by $T_0$ the set of points satisfying the above relations. We define in an analogous way the sets $T_1,\ldots ,T_{d-1}$ consisting, respectively, of the points in $\PP{n}$ whose preimages \emph{via} $\phi$ are made by single points with multiplicity, respectively, $d-1,d-2,\ldots,1$. For example, the points in $T_1$ will satisfy $f_0=0$, $f_1\neq 0$ and
\[
f_l={d-1 \choose l-1}\frac{f_2^{\, l-1}}{(d-1)^{\, l-1}f_1^{\, l-2}}\qquad\forall\ l=3,\ldots,d.
\]
Finally, we have $T=T_0\cup\dots\cup T_d$, where the last two sets involved are $T_{d-1}=\{f_0=\dots =f_{d-2}=0,f_{d-1}\neq 0\}$ and the set of points not belonging to $\phi(\XX)$, $T_d=\{f_0=\ldots =f_{d-1}=0\ ,\ f_d\neq 0\}$.

We can now state our result:
\begin{teo}\label{main}
Assuming the hypotheses and notations discussed above in this section, the Zariski closure of any set of quasi-$S$-integral points for $\PP n\setminus D$ has dimension less than or equal to $\dim T_0+1$.
\end{teo}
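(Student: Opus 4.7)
The plan is to follow the Faltings--Zannier template: lift the integral points from $\PP n\setminus D$ to the $d$-sheeted cover $\phi\colon\XX\to\PP n$, extract a system of $S$-unit relations among the roots of $f$, and apply a Subspace-Theorem--based argument (as in \cite{cz}) to trap $\Sigma$ inside a subvariety controlled by the stratum $T_0$.

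Let $\Sigma$ be a set of quasi-$S$-integral points on $\PP n\setminus D$. For each $P=(x_0\!:\!\ldots\!:\!x_n)\in\Sigma$, the polynomial $f(x_0,\ldots,x_n,X)$ factors over $\bar k$ as $f_0(P)\prod_{i=1}^{d}(X-\alpha_i(P))$. A standard Chevalley--Weil style extension of scalars lets us pass to a finite extension $k'/k$ and enlarge $S$ to $S'$ so that every $\alpha_i(P)$ lies in $k'$ and is $S'$-integral. Since $D$ is defined by $f_0\:\!\Delta=0$ (up to components), the quasi-$S$-integrality of $P$ forces $f_0(P)$ and $\Delta(P)$ to be $S$-units; combined with the discriminant identity $\Delta=f_0^{\,2d-2}\prod_{i<j}(\alpha_i-\alpha_j)^2$, this makes every difference $\alpha_i(P)-\alpha_j(P)$ an $S'$-unit. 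The three-term identities $(\alpha_i-\alpha_j)-(\alpha_i-\alpha_k)+(\alpha_j-\alpha_k)=0$ then provide a rich family of $S'$-unit relations, exactly the input needed for the Subspace-Theorem arguments of \cite{cz}.

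Applying this machinery to a suitable projective embedding of the Galois closure of $\phi$, one forces all but finitely many lifted points into a proper Zariski-closed subset $Y\subset\XX$; running the argument inductively on the irreducible components of $\phi(Y)\subseteq\PP n$ (each of which still carries a quasi-$S'$-integral family) traps $\Sigma$ in a subvariety on which the excluded linear forms impose algebraic collapses among the $\alpha_i(P)$. These collapses are precisely what defines the descending stratification $T_0\supset T_1\supset\ldots$, with $T_0$ cut out by the relations \eqref{t0} when \emph{all} $d$ roots coincide; one then checks that the confining variety sits inside the closure of $T_0$ plus a single extra projective parameter accounting for the free direction left after fixing the collapse condition $\alpha=-f_1/(d\!\:f_0)$, yielding the bound $\dim\overline\Sigma\leq\dim T_0+1$. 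The main obstacle will be precisely this final geometric identification: one must rule out larger exceptional components produced by the Subspace Theorem, and separately handle the case $Q\in\XX$, where $f_0$ becomes a nonconstant polynomial whose zero locus contributes both to $D$ and to the lower strata $T_l$ with $l\geq 1$.
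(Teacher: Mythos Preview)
Your opening two paragraphs are on the right track: after enlarging $k$ and $S$, the differences $\alpha_i-\alpha_j$ are indeed $S$-units, and the three-term identities among them are the correct raw material. But from that point on the proposal is a sketch with a genuine gap, and the gap lies exactly where you yourself flag it: the ``final geometric identification''. Nothing in your outline explains \emph{why} the exceptional set produced by a Subspace-Theorem argument should have dimension at most $\dim T_0+1$; the sentence ``plus a single extra projective parameter accounting for the free direction left after fixing the collapse condition'' is not an argument, and the inductive scheme on components of $\phi(Y)$ has no visible termination linked to $T_0$. (Incidentally, $T_0\supset T_1\supset\cdots$ is false: the $T_i$ are pairwise disjoint, being distinguished by the first index $l$ with $f_l\neq 0$.)

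The paper closes this gap by a concrete construction that you are missing. From the $S$-unit relations one does \emph{not} invoke the Subspace Theorem; instead one applies the finiteness of solutions of the homogeneous three-term $S$-unit equation to conclude that the cross-ratios
\[
c_i=\frac{(\alpha_i-\alpha_2)(\alpha_3-\alpha_1)}{(\alpha_i-\alpha_1)(\alpha_3-\alpha_2)}\qquad(i=4,\ldots,d)
\]
lie in a fixed finite set. After splitting $\Sigma$ into finitely many pieces on which the $c_i$ are constant, every root $\alpha_i$ becomes an explicit rational function of $\alpha_1,\alpha_2,\alpha_3,\alpha_4$. One then builds an auxiliary quasi-projective variety $W\subset\PP{n+4}$ in variables $X_0,\ldots,X_n,Y_1,\ldots,Y_4$, cut out by the Vi\`ete relations $B\,f_l=(-1)^l f_0\,A_l$, with the property that $\Sigma\subset\pi(W)$ under the projection $\pi$ to the first $n+1$ coordinates. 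The whole point is that the hyperplane section $W\cap\{Y_2=Y_3\}$ projects into $T_0$: setting $Y_2=Y_3$ collapses all the $a_i/b_i$ to the common value $y_2$, which forces the relations \eqref{t0}. Hence $\dim\overline\Sigma\le\dim\pi(W)\le\dim\pi(W\cap\{Y_2=Y_3\})+1\le\dim T_0+1$. This hyperplane-section trick is the mechanism that produces the ``$+1$'', and it is absent from your proposal.

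A secondary but important difference: by using the $S$-unit equation rather than the Subspace Theorem, the paper's argument is effective (via Baker's theory), a feature stressed in Section~4. Your proposed route through \cite{cz} would forfeit this.
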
\vspace{.5cm}

\section{Proof}

We will make use of the following well-known fact (for a proof, see Proposition 2.3 in \cite{zan}):

\begin{prop}\label{xyz}
Let $L\subset\PP n$ be an effective divisor defined by a form $\Lambda\in k[X_0,\ldots,X_n]$ and let $\Sigma$ be a set of quasi-$S$-integral points for the affine variety $\PP n\setminus L$. Then there exists a finite set of places $S'\supset S$ of $k$ such that each point of $\Sigma$ has projective coordinates $(x_0:\dots:x_n)$ with $x_i\in\mathcal O_{\!S'}$ and $\Lambda(x_0,\ldots,x_n)\in\mathcal{O}_{\!S'}^{\:\!*}$.
\end{prop}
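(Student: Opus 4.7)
\emph{Proof proposal.} The plan is to unwind the definition of quasi-$S$-integrality via models over $\mathcal O_{\!S}$ and to absorb into a single finite enlargement $S'\supset S$ three kinds of ``bad'' primes that appear in passing from the abstract notion of integrality to an explicit statement about projective coordinates: the places where quasi-integrality is permitted to fail, the places at which the coefficients of $\Lambda$ are not already integral, and a finite collection representing every ideal class of $k$.

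First, I would recall that (in the conventions of \cite{ser} and \cite{zzz}) quasi-$S$-integrality of $\Sigma\subset(\PP n\setminus L)(k)$ amounts to the existence of a finite $S_1\supset S$ such that $\Sigma$ is a set of $S_1$-integral points for $\PP n\setminus L$. I would then enlarge $S_1$ further to a set $S'$ by adjoining (i) the finitely many primes at which some coefficient of $\Lambda$ fails to be an $S_1$-integer, and (ii) a finite set of primes representing every ideal class of $k$, so that $\mathcal O_{\!S'}$ is a principal ideal domain. Both additions involve only finitely many places, and nothing in their choice depends on the particular point $P\in\Sigma$. With $\mathcal O_{\!S'}$ a PID, every $P\in\Sigma$ admits a primitive projective representative $x=(x_0,\dots,x_n)\in\mathcal O_{\!S'}^{\,n+1}$, meaning that the $x_i$ generate the unit ideal of $\mathcal O_{\!S'}$; equivalently $\max_i|x_i|_v=1$ for every $v\notin S'$. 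Two such representatives differ by a factor in $\mathcal O_{\!S'}^{\:\!*}$, so whether $\Lambda(x)$ is an $S'$-unit depends only on $P$.

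It then remains to check that $\Lambda(x)\in\mathcal O_{\!S'}^{\:\!*}$ uniformly in $P$. Since the coefficients of $\Lambda$ are $S'$-integral and $x$ has $S'$-integral entries, the value $\Lambda(x)$ already lies in $\mathcal O_{\!S'}$, i.e.\ $v(\Lambda(x))\geq 0$ for every $v\notin S'$. A strict inequality $v(\Lambda(x))>0$ at some such $v$ would mean that the reduction of $P$ modulo $v$ lies on $L$, which contradicts the $S'$-integrality of $P$ for the affine variety $\PP n\setminus L$ (inherited from the $S_1$-integrality provided by the quasi-integrality hypothesis, since $S_1\subset S'$). Therefore $v(\Lambda(x))=0$ for every $v\notin S'$, giving $\Lambda(x)\in\mathcal O_{\!S'}^{\:\!*}$ as claimed.

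The main point I would take care to treat explicitly is that the finite set $S'$ must be chosen \emph{once and for all}, not point by point: the uniformity of the first enlargement $S\subset S_1$ is precisely the content of the quasi-integrality hypothesis on the set $\Sigma$, while the other two enlargements depend only on $k$ and on the fixed form $\Lambda$ and not on the individual points. Once these three ingredients are aligned, the verification itself is essentially local and immediate; the conceptual obstacle, and the only reason the statement is nontrivial, is ensuring this uniformity across all of $\Sigma$ simultaneously.
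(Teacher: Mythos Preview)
The paper does not supply its own proof of this proposition; it simply quotes it as a ``well-known fact'' and refers the reader to Proposition~2.3 of \cite{zan}. Your argument is correct and is precisely the standard one behind that reference: absorb into $S'$ the quasi-integrality defect, the denominators of $\Lambda$, and enough primes to make $\mathcal O_{\!S'}$ principal, then take primitive coordinates and read off the unit condition place by place.
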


In order to give more emphasis to the underlying methods and ideas leading to the result, we postpone the discussion of the ``low degrees'' case. More precisely, during the proof we will make the assumption that the degree $d$ of the polynomial $f_0(X_0,\dots,X_n)$ is greater than or equal to $4$. We will go back to that point in the next section (paragraph \emph{``Low degrees''}) and complete the proof for $d=2$ and $d=3$.\vspace{.2cm}

\noindent\textsc{First step -} Let $\Sigma$ be a set of quasi-$S$-integral points for $\PP{n}\setminus D$. By the above Proposition there exists a finite set $S'\supset S$, made up of places of $k$, such that for every point in $\Sigma$ there are projective coordinates $(x_0:\ldots:x_n)$ such that every $x_i$ belongs to $\mathcal O_{\!S'}$ and $\Delta(x_0,\dots,x_n)\in\mathcal{O}_{\!S'}^{\:\!*}$; we choose $P\in\Sigma$ and projective coordinates $(x_0:\dots:x_n)$ for it so that the properties we just mentioned are satisfied.

Then we consider the equation $f(x_0,\ldots,x_n,X)=0$ which has $d$ distinct roots in $\overline{\mathbb Q}$ since $P\notin D$. We shall denote them by $\alpha_1,\ldots,\alpha_d$ and we consider the number field $k'$ they generate over $k$, which depends on $P$: it has bounded degree and it is unramified except at places above $S'$. Hermite's Theorem implies that there are at most a finite number of number fields with these properties, hence we may choose a number field $k''$ such that it contains all the roots $\alpha_i$ regardless of the chosen point $P$. Finally, we may define a finite set $S''$ constituted by places of $k''$ that contains the extension of $S'$ to a set of places of $k''$ and such that the polynomials $f_i(X_0,\ldots,X_n)$ have coefficients in $\mathcal{O}_{k'',\ S''}$. Noting that by enlarging $k$ or $S$ we make our conclusion stronger, we assume in this proof $k=k''$ and $S=S''$.\vspace{.2cm}

\noindent\textsc{Second step -} We can now consider the usual factorization of the discriminant $$\Delta(x_0,\ldots,x_n)=f_0^{2d-2}\prod_{1\leqslant i<j\leqslant d}(\alpha_i -\alpha_j)^2$$ which is valid because $f_0(x_0,\ldots,x_n)\neq 0$ since $P\notin D$. Every root can be written as a product $\alpha_i=\mu_i\delta_i^{\:-1}$ with $\mu_i$ and $\delta_i$ coprime $S$-integers. We also note that every polynomial $\delta_i X-\mu_i$ divides $f(x_0,\ldots,x_n,X)$ in $\OS[X]$, thus $\delta_1\cdots\delta_d$ divides $f_0$ in $\OS$. It follows that $\Delta(x_0,\ldots,x_n)$ is divisible in $\OS$ by $\prod_{i\neq j}(\delta_j\mu_i -\delta_i\mu_j)$ and, since the discriminant is an $S$-unit, we deduce that every factor $\delta_j\mu_i -\delta_i\mu_j$ must be in $\OSU$.

We define $x_{ij}:=\delta_j\mu_i - \delta_i\mu_j$ and we consider the identity $$x_{i1}x_{23}+x_{i2}x_{31}+x_{i3}x_{12}=0$$ where $i\in\{4,\ldots,d\}$ and every summand is clearly in $\OSU$. Since we just produced solutions to the homogeneous $S$-unit equation, we may apply some finiteness result (see \cite{sch} or \cite{zzz}) and obtain that, for example, the ratio $x_{i2}x_{31}/x_{i1}x_{32}$ lies in a finite set independent of the chosen point $P$. In order to write down some algebraic relations among the roots $\alpha_i$, we observe that we have just proved that for certain $c_i=c_i(P)$ in a fixed finite set, we have $$c_i=\frac{x_{i2}x_{31}}{x_{i1}x_{32}}=\frac{(\alpha_i -\alpha_2)(\alpha_3 -\alpha_1)}{(\alpha_i -\alpha_1)(\alpha_3 -\alpha_2)}\qquad i\in\{4,\ldots,d\}$$
and if we put $c_2:=0$ and $c_3:=1$ we have analogous relations for $i=2$ and $i=3$. After some easy manipulations, we can write the following expressions for the roots:
\begin{equation}\label{alphai}\alpha_i =\left\{
\begin{array}{ll}
\frac{\displaystyle\alpha_1 (\alpha_2-\alpha_3)c_i+\alpha_2(\alpha_3-\alpha_1)}{\displaystyle(\alpha_2-\alpha_3)c_i+\alpha_3-\alpha_1} & i=2,\ldots,d\\
\\
\frac{\displaystyle\alpha_4 (\alpha_2-\alpha_3)c_4+\alpha_3(\alpha_4-\alpha_2)}{\displaystyle(\alpha_2-\alpha_3)c_4+\alpha_4-\alpha_2} & i=1.\\
\end{array}\right.
\end{equation}
Finally, we can split $\Sigma$ into finitely many subsets such that the $c_i$'s are fixed for every point in a given subset. Arguing separately with each subset we may then assume that the $c_i$'s do not depend on $P$.\vspace{.2cm}

\noindent\textsc{Third step -} We pause to outline how we will make use of the information obtained so far. We are going to define a quasi-projective variety in $\PP{n+4}$ and its projection on $\PP n$ will lead to the sought relation between $\Sigma$ and $T_0$. Intuitively, $n+1$ coordinates are required to define a point in $\Sigma\subset\PP n$ and four values are required to express all the roots $\alpha_i$, see (\ref{alphai}) above. The polynomials that we are about to introduce are defined following the relations (\ref{alphai}) and then considering Vi\`ete's formulae to provide a link between the roots $\alpha_i$ and the polynomials $f_i$: they are essential in the definition of the quasi-projective variety above mentioned.

We start defining some auxiliary polynomials in $k[Y_1,Y_2,Y_3,Y_4]$:
\[
a_i(Y_1,Y_2,Y_3,Y_4) =\left\{
\begin {array}{ll}
Y_1(Y_2-Y_3)c_i+Y_2(Y_3-Y_1) &\quad i=2,\ldots,d\\
\\
Y_4(Y_2-Y_3)c_4+Y_3(Y_4-Y_2) &\quad i=1\\
\end{array}\right.
\]
\[
b_i(Y_1,Y_2,Y_3,Y_4) =\left\{
\begin {array}{ll}
\phantom{Y_1}(Y_2-Y_3)c_i+\phantom{Y_2(}Y_3-Y_1\phantom{)} &\quad i=2,\ldots,d\\
\\
\phantom{Y_4}(Y_2-Y_3)c_4+\phantom{Y_3(}Y_4-Y_2\phantom{)} &\quad i=1\\
\end{array}\right.
\]
\[
\begin{array}{rcl}A_l(Y_1,Y_2,Y_3,Y_4) &\! =\! & \displaystyle{\sum_{1\leqslant i_1<\ldots <i_l\leqslant d}\! a_{i_1}(Y_1,Y_2,Y_3,Y_4)\cdots a_{i_l}(Y_1,Y_2,Y_3,Y_4)}\ \cdot\\
\\
&\cdot & \displaystyle{\prod_{\substack{1\leqslant j\leqslant d\\j\neq i_1,\ldots,i_l}}b_j(Y_1,Y_2,Y_3,Y_4)}\hspace{1.9cm} l=1,\ldots ,d
\end{array}
\]
\[
B(Y_1,Y_2,Y_3,Y_4)=\prod_{i=1}^d b_i(Y_1,Y_2,Y_3,Y_4).
\]

If, as before, $P=(x_0:\ldots:x_n)\in\Sigma$ is the point in question and $f(x_0,\ldots,x_n,X)$ has roots $\alpha_1,\ldots,\alpha_d$, we observe that, because of (\ref{alphai}),
\begin{equation}\frac{a_i(\alpha_1,\alpha_2,\alpha_3,\alpha_4)}{b_i(\alpha_1,\alpha_2,\alpha_3,\alpha_4)}=\alpha_i\qquad i=1,\ldots,d.\end{equation}
Furthermore, since the coefficients of a polynomial can be expressed as the product of the leading coefficient and the correspondent symmetric function calculated in its roots, we have for $l=1,\ldots,d$
\begin{equation}\label{fl}
f_l(x_0,\ldots,x_n)=(-1)^{\, l} f_0(x_0,\ldots,x_n)\cdot\frac{A_l(\alpha_1,\alpha_2,\alpha_3,\alpha_4)}{B(\alpha_1,\alpha_2,\alpha_3,\alpha_4)}.
\end{equation}
After these remarks, we are ready to define and study a projective variety $V\subset\PP{n+4}$ given by the common zero locus of the $d$ polynomials
\begin{equation}\label{defv}B f_l -(-1)^{\, l} f_0 A_l\end{equation}
where $B$ and $A_l$ belong to $k[Y_1,Y_2,Y_3,Y_4]$, $f_0$ and $f_l$ are in $k[X_0,\dots, X_n]$ and $l$ ranges from $1$ to $d$.

Since our main interest is focused on $\Sigma\subset\PP n$, we are going to consider the projection of $V$ to $\PP n$ by taking the first $n+1$ coordinates. To ensure that the projection is well-defined we have to remove points with nothing but zeroes in the first $n+1$ coordinates. In addition, we are going to ignore points that belong to $V$ regardless of the first $n+1$ coordinates, only because the $Y_i$'s have special values. Furthermore, we would like, at some point, to get rid of the zeroes of $f_0(X_0,\dots,X_n)$ in $\PP n$, because they cannot be in $T_0$. We accomplish these goals by defining the varieties
$$\begin{array}{ll}U_0:=\{&\!\!\!\!\!(z_0:\ldots:z_n:y_1:y_2:y_3:y_4)\in V:z_0=\dots =z_n=0\}\vspace{.15cm}\\
U_1:=\{&\!\!\!\!\!(z_0:\ldots:z_n:y_1:y_2:y_3:y_4)\in V:B(y_1,y_2,y_3,y_4)=0\textrm{ and}\vspace{.05cm}\\
&\!\!\!\!\!A_l(y_1,y_2,y_3,y_4)=0\quad\forall l=1,\dots, d\}\vspace{.15cm}\\
U_2:=\{&\!\!\!\!\!(z_0:\ldots:z_n:y_1:y_2:y_3:y_4)\in V:f_0(z_0,\dots,z_n)=0\}\vspace{.15cm}\\
U:=&\!\!\!\!\!\!\!U_0\cup U_1\cup U_2\end{array}$$
and a quasi-projective variety which is the complement of $U$ in $V$:
$$W:=V\setminus U.$$
Finally, we consider the projection from W to $\PP n$:
\[
\funz{\pi}{W}{\PP{n}}{(z_0:\ldots:z_n:y_1:y_2:y_3:y_4)}{(z_0:\ldots:z_n).}
\]\vspace{.2cm}

\noindent\textsc{Fourth step -} Once again we look at $P=(x_0:\ldots:x_n)\in\Sigma$ and we observe that $(x_0:\ldots:x_n:\alpha_1:\alpha_2:\alpha_3:\alpha_4)\in V$ because of \eqref{fl}. We also observe that there exists $i\in\{1,\dots,d\}$ such that $x_i\neq 0$ since $P\in\PP n$, hence $P\notin U_0$. Furthermore, $f_0(x_0,\dots,x_n)\neq 0$ because $P\notin D$ and $B(\alpha_1,\alpha_2,\alpha_3,\alpha_4)\neq 0$ because the roots $\alpha_i$ are all pairwise distinct. It follows that $(x_0:\ldots:x_n:\alpha_1:\alpha_2:\alpha_3:\alpha_4)$ actually belongs to $W$, whence $\Sigma\subset\pi(W)$.

We investigate now what happens to $W$ when intersected with the hyperplane $\{Y_2=Y_3\}\subset\PP{n+4}$. First of all we notice that
$$\frac{a_i(y_1,y_2,y_2,y_4)}{b_i(y_1,y_2,y_2,y_4)}=y_2\qquad i=1,\ldots,d$$
and, subsequently, we have
$$\frac{A_l(y_1,y_2,y_2,y_4)}{B(y_1,y_2,y_2,y_4)}=\sum_{1\leqslant i_1<\ldots <i_l\leqslant d}\frac{a_{i_1}}{b_{i_1}}\cdots\frac{a_{i_l}}{b_{i_l}}={d\choose l}\ y_2^{\, l}\qquad l=1,\ldots,d.$$
From the defining equations of $V$ and the ones displayed above, we have for every point $(z_0:\ldots:z_n:y_1:y_2:y_2:y_4)\in W\cap\{Y_2=Y_3\}$ the following relation:
\begin{equation}\label{int}
\begin{split}
f_l(z_0,\ldots,z_n) & =(-1)^{\, l} f_0(z_0,\ldots,z_n)\frac{A_l(y_1,y_2,y_2,y_4)}{B(y_1,y_2,y_2,y_4)}\\
& =(-1)^{\, l} f_0(z_0,\dots,z_n) {d\choose l}y_2^{\, l}
\end{split}
\end{equation}
which is valid for $l=1,\ldots,d$. In particular, we get $f_1=-d\:\!f_0\;\!y_2$ and therefore
$$f_l(z_0,\ldots,z_n)=f_0(z_0,\ldots,z_n){d\choose l}\left(\frac{f_1(z_0,\ldots,z_n}{d\:\! f_0(z_0,\ldots,z_n)}\right)^{\, l}\qquad l=1,\ldots,d.$$
Then, recalling the defining equations for $T_0$ (\ref{t0}), we have just proved that $\pi\big(W\cap\{Y_2=Y_3\}\big)\subset T_0$.

We draw a diagram to help us clarify the role of the auxiliary objects we introduced in the proof:\vspace{.15cm}
\[
\begindc{\commdiag}[20]
\obj(5,3)[w]{$W\cap\{Y_2=Y_3\}$}
\obj(0,3)[wi]{$W$}
\obj(5,0)[pw]{$\pi\big(W\cap\{Y_2=Y_3\}\big)$}
\obj(0,0)[pwi]{$\pi(W)$}
\obj(10,0)[s]{$T_0$}
\obj(-3,0)[t]{$\Sigma$}
\mor{w}{wi}{}[\atright,\injectionarrow]
\mor{w}{pw}{$\pi$}
\mor{pw}{s}{}[-1,\injectionarrow]
\mor{t}{pwi}{}[\atright,\injectionarrow]
\mor{wi}{pwi}{$\pi$}
\mor{pw}{pwi}{}[\atright,\injectionarrow]
\enddc\vspace{.2cm}
\]
Finally, we consider the Zariski closure of $\Sigma$ and we readily have that $\dim\overline{\Sigma}\leqslant\dim T_0+1$. This completes the proof for $d\geqslant 4$.\qed\vspace{.5cm}

\section{Details and remarks}

\noindent\emph{Effectivity -} A noteworthy feature of Theorem \ref{main} is its effectivity. This is a consequence, essentially, of the fact that we obtained a finiteness result during the second step of the proof without the help of Schmidt's Theorem or other ineffective conclusions from Diophantine approximation. Instead, we used results about $S$-unit equations and it is known that a finite and complete set of non-proportional representatives can be effectively found (for example \emph{via} Baker's theory, see \cite{bak}). Therefore it is possible to determine all the auxiliary objects introduced in the proof, assuming $\XX$ is given, and we may actually exhibit the set $\pi(W)$ containing $\Sigma$.

We must point out that the set of solutions depends naturally on $k$ and $S$; they may have been enlarged with the application of Proposition \ref{xyz}, so an explicit notion of quasi-$S$-integral points is also required to have a unique determination for the solutions of the $S$-unit equation. In other words, we are required to specify an affine model for $\PP{n}\setminus D$.

We also remark that another result of crucial importance in our proof is Hermite's Theorem, which is effective as well.\\

\noindent\emph{Analysis of the results -} We would like to study the dimension of $T_0$, once the geometric setting is specified, and to compare it to the dimension of $T$ (which replaces $\dim T_0$ in the bound given in \cite{zan}). Obviously we have $\dim T_0\leqslant\dim T$, as $T_0\subset T$, but it is not hard to see that equality holds very often. In fact, $T$ is the disjoint union of its $d+1$ subsets $T_i$, each of them defined by an inequality and $d-1$ equations (save $T_d$ which is defined by $d$ equations) and $\dim T=\max\{\dim T_i\}_{i=0,\dots,d}$.

In order to study the difference between $\dim T$ and $\dim T_0$, it may be useful to have explicit conditions for the sets $T_i$. We see that a point $(x_0:\dots:x_n)\in\PP n$ belongs to $T_i$ if and only if the following conditions are satisfied (we denote $f_j(x_0,\dots,x_n)$ simply by $f_j$):
\begin{equation}\label{ti}\left\{
\begin{array}{ll}
f_l=0 & \forall\ l<i\\
f_i\neq 0&\\
(d-i)^{l-i}\ f_i^{l-i-1}\ f_l={d-i\choose l-i}f_{i+1}^{l-i}\qquad & \forall\ l\geqslant i+2.\\
\end{array}\right.
\end{equation}
We observe that if there is $l<i$ such that $f_l$ divides $f_i$ in $k[X_1,\dots,X_n]$ we have $T_i=\varnothing$. If we ask $Q\notin\XX$ we have $f_0\in k^*$ and so, for every $i=1,\dots,d$, we have $f_0|f_i$, whence $T_i=\varnothing$ and $T=T_0$.\\

\noindent\emph{A criterion for finiteness -} Suppose that there are $i,j\in\{1,\dots,d\}$ such that the polynomial $f_i$ is the null polynomial and $f_j$ vanishes only if $f_0$ does. Then, recalling conditions (\ref{ti}) for the set $T_0$, we get $f_0\neq 0$ and $f_1=0$; this happens trivially if $i=1$ and comes from the equation $d^if_0^{i-1}f_i=\binom{d}{i}f_1^i$ otherwise. This yields the condition $f_l=0$ for every $l\geqslant 1$, hence $f_j$ must vanish which contradicts the requirement $f_0\neq 0$. Hence $T_0=\varnothing$.

\begin{cor}\label{corfin}Notation being as in Section 2, suppose that there are $i,j\in\{1,\dots ,d\}$ such that $f_i(X_0,\dots,X_n)$ is the null polynomial and $f_j(X_0,\dots,X_n)=0$ implies $f_0(X_0,\dots,X_n)=0$. Then every set of quasi-$S$-integral points for $\PP n\setminus D$ is a finite set.
\end{cor}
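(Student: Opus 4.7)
The plan is to reduce the statement to Theorem \ref{main} by showing that, under the stated hypotheses, the set $T_0$ is empty. Once this is established, the bound $\dim\overline{\Sigma}\leqslant\dim T_0+1$ gives (with the convention $\dim\varnothing=-1$) that $\dim\overline{\Sigma}\leqslant 0$, so every quasi-$S$-integral set $\Sigma$ for $\PP n\setminus D$ is necessarily finite. All the work is therefore concentrated in verifying $T_0=\varnothing$.

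Checking this amounts to inspecting the defining equations \eqref{t0}. On any hypothetical point of $T_0$ one would have $f_0\neq 0$ together with the identities
\[
d^{\,l}\:\! f_0^{\,l-1}\:\! f_l\;=\;\binom{d}{l}\:\! f_1^{\,l},\qquad l=2,\ldots,d.
\]
The assumption that some $f_i$ is identically zero is used first to deduce $f_1=0$ on $T_0$: if $i=1$ this is tautological, while if $i\geqslant 2$ specializing the relation above to $l=i$ gives $\binom{d}{i}\:\! f_1^{\,i}=0$, and since $\binom{d}{i}$ is a nonzero integer and $k[X_0,\ldots,X_n]$ is a domain, one concludes $f_1=0$ on $T_0$. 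Feeding $f_1=0$ back into the remaining relations then forces $f_l=0$ on $T_0$ for every $l\geqslant 2$.

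At this point every $f_l$ with $l\geqslant 1$ vanishes on $T_0$; in particular $f_j=0$ on $T_0$, which by the second hypothesis forces $f_0=0$ there, contradicting the defining condition $f_0\neq 0$. Hence $T_0=\varnothing$, and the corollary follows from Theorem \ref{main}. The argument is essentially bookkeeping: the only point demanding care is the case split between $i=1$ and $i\geqslant 2$ in the first deduction, so there is no genuine obstacle.
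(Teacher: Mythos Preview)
Your proof is correct and follows essentially the same route as the paper: you show $T_0=\varnothing$ by deducing $f_1=0$ on $T_0$ from the vanishing of $f_i$ (splitting into $i=1$ and $i\geqslant 2$), then conclude $f_l=0$ for all $l\geqslant 1$, contradicting the hypothesis on $f_j$, and finish via Theorem~\ref{main}. The only minor slip is the phrase ``$k[X_0,\ldots,X_n]$ is a domain'': at that point you are evaluating at a point of $T_0$, so the relevant fact is that the field has no zero divisors, but this does not affect the argument.
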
\vspace{.3cm}

\noindent\emph{On the complement of $\{\Delta=0\}$ -} We state and prove a Corollary of Theorem \ref{main} which allows for the points of $\PP{n}$ where the leading coefficient of $f(X_0,\dots,X_{n+1})$ as a polynomial in $X_{n+1}$ vanishes. In other words, we investigate the quasi-$S$-integral points on the complement of the divisor defined by the discriminant.

\begin{cor}\label{cordelta}Notations being as in Section 2, the Zariski closure of any set $\Sigma$ of quasi-$S$-integral points for $\PP{n}\setminus\{\Delta =0\}$ has dimension less than or equal to $\dim (T_0\cup T_1)+1$.\newline Moreover, if $f_0(x_0,\dots,x_n)\neq 0$ (resp. $f_0(x_0,\dots,x_n)=0$) for every $(x_0:\dots: x_n)\in\Sigma$, we have that the dimension of the Zariski closure of $\Sigma$ is less than or equal to $\dim T_0+1$ (resp. $\dim T_1+1$).\end{cor}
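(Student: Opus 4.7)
The plan is to partition $\Sigma$ according to whether $f_0$ vanishes at the point and bound the two pieces separately by reapplying the machinery of Theorem \ref{main}. By Proposition \ref{xyz} applied to $\PP n\setminus\{\Delta=0\}$, I may enlarge $S$ to a finite set $S'$ so that every $P\in\Sigma$ admits projective coordinates $(x_0:\dots:x_n)$ with $x_i\in\mathcal{O}_{\!S'}$ and $\Delta(x_0,\dots,x_n)\in\mathcal{O}_{\!S'}^{\:\!*}$. Write $\Sigma=\Sigma_0\sqcup\Sigma_1$ with $\Sigma_0=\{P\in\Sigma:f_0(x)\ne 0\}$ and $\Sigma_1=\{P\in\Sigma:f_0(x)=0\}$. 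Since $\overline{\Sigma}=\overline{\Sigma_0}\cup\overline{\Sigma_1}$, the first assertion will follow from the bounds $\dim\overline{\Sigma_0}\leqslant\dim T_0+1$ and $\dim\overline{\Sigma_1}\leqslant\dim T_1+1$; the ``moreover'' statement is then automatic, because each extra hypothesis empties one of the two pieces.

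For $\Sigma_0$ I plan to rerun the proof of Theorem \ref{main} unchanged. Inspecting that proof, the integrality data enters only in the second step, where the factorization $\Delta=f_0^{2d-2}\prod(\alpha_i-\alpha_j)^2$ is combined with $\Delta(x)\in\mathcal{O}_{\!S'}^{\:\!*}$ and $f_0(x)\ne 0$ to force every $\delta_j\mu_i-\delta_i\mu_j$ into $\mathcal{O}_{\!S'}^{\:\!*}$; at no point is $f_0$ itself required to be a unit. Both conditions hold on $\Sigma_0$, so the subsequent $S'$-unit equation argument and the construction of the quasi-projective variety $W$ together with its projection $\pi$ all go through verbatim (including the low-degree treatment of Section 4), yielding $\dim\overline{\Sigma_0}\leqslant\dim T_0+1$.

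For $\Sigma_1$ the key input is the discriminant identity
\[
\Delta(x_0,\dots,x_n)\;=\;f_1(x_0,\dots,x_n)^2\,\Delta'(x_0,\dots,x_n)\qquad\text{on }\{f_0=0\},
\]
where $\Delta'$ denotes the discriminant of the degree-$(d-1)$ polynomial $f_1X^{d-1}+\cdots+f_d$ in $X$. Because $\Delta(x)\in\mathcal{O}_{\!S'}^{\:\!*}$ while $f_1(x)$ and $\Delta'(x)$ lie in $\mathcal{O}_{\!S'}$, the identity forces both of them into $\mathcal{O}_{\!S'}^{\:\!*}$; in particular $f_1(x)\ne 0$ on $\Sigma_1$. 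Thus each point of $\Sigma_1$ satisfies the hypotheses of Theorem \ref{main} applied to the reduced polynomial $f_1X^{d-1}+\cdots+f_d$ in place of $f$, with $f_1$ playing the role of the leading coefficient. The set that plays the role of $T_0$ for this reduced polynomial is, by construction, precisely $T_1$ (the points whose fibre under $\phi$ is a single point of multiplicity $d-1$), so the theorem delivers $\dim\overline{\Sigma_1}\leqslant\dim T_1+1$. The degenerate case $d=2$, for which $d-1=1$ is outside the scope of Theorem \ref{main}, is handled directly: on $\{f_0=0\}$ one has $\Delta=f_1^2$, so $f_1(x)\ne 0$ and hence $\Sigma_1\subset T_1$ outright.

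The main obstacle I anticipate is certifying the identity $\Delta=f_1^2\Delta'$ along $\{f_0=0\}$ in a clean way; it can be read off from the Sylvester matrix expression for the resultant, or alternatively by letting one root of $f$ tend to infinity and tracking the leading behavior of $f_0^{2d-2}\prod(\alpha_i-\alpha_j)^2$ under this degeneration. Once this identity is in hand, the corollary is a mechanical rerun of Theorem \ref{main}'s argument on each of $\Sigma_0$ and $\Sigma_1$.
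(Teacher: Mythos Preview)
Your approach matches the paper's: split $\Sigma$ by whether $f_0$ vanishes, invoke the identity $\Delta|_{f_0=0}=f_1^{2}\Delta_{d-1}$ on the $f_0=0$ piece, and rerun the Theorem~\ref{main} machinery on each part. The paper carries this out explicitly, building a second auxiliary variety $W'$ from modified polynomials $A'_l,B'$ in $d-1$ roots.

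There is one real slip. You write that ``the set that plays the role of $T_0$ for this reduced polynomial is, by construction, precisely $T_1$,'' but the $T_0$-locus attached to $g=f_1X^{d-1}+\cdots+f_d$ is cut out only by $f_1\ne 0$ together with the relations $(d-1)^{l-1}f_1^{\,l-2}f_{l}=\binom{d-1}{l-1}f_2^{\,l-1}$ for $l\geqslant 3$; it imposes \emph{no} condition on $f_0$. Hence $T_1=T_0^{\,g}\cap\{f_0=0\}$, and invoking Theorem~\ref{main} for $g$ as a black box only yields $\dim\overline{\Sigma_1}\leqslant\dim T_0^{\,g}+1$, which can be strictly weaker than the claimed $\dim T_1+1$ (and so also weakens the first assertion). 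The paper closes this gap by adjoining the equation $f_0(X_0,\dots,X_n)=0$ to the defining equations of the auxiliary variety $V'$; the lifts of points of $\Sigma_1$ still lie in $W'=V'\setminus U'$ because $f_0$ vanishes on $\Sigma_1$, and the hyperplane slice $W'\cap\{Y_2=Y_3\}$ then projects into $T_0^{\,g}\cap\{f_0=0\}=T_1$. With this single amendment your argument goes through and coincides with the paper's.
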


\begin{proof}
Let $\Sigma$ be a set of quasi-$S$-integral points for $\PP{n}\setminus\{\Delta =0\}$ and consider a point $P=(x_0:\dots:x_n)\in\Sigma$. As before, we look at the polynomial $f(x_0,\ldots,x_n,X)$ which has $d$ or $d-1$ roots: we denote these pairwise distinct roots by $\alpha_1,\ldots,\alpha_{d-1}$ and, in case, $\alpha_d$. The first thing we observe is that $f_0(x_0,\dots,x_n)$ and $f_1(x_0,\dots,x_n)$ cannot be both equal to zero for otherwise we would have $\Delta(x_0,\dots,x_n)=0$. Again, we can apply Proposition \ref{xyz} and enlarge $k$ and $S$ to ensure that every point of $\Sigma$ has projective coordinates with entries in $\OS$ and that $\Delta$ has values in $\OSU$.

If $f_0(x_0,\dots,x_n)\neq 0$ we follow the proof of Theorem \ref{main} until we get the relations \eqref{alphai} among the roots. If $f_0(x_0,\dots,x_n)=0$ we consider the discriminant $\Delta_{d-1}(\boldsymbol x)$ of the polynomial $f(x_0,\dots,x_n,X)\in k[X]$ of degree $d-1$ and we observe that
\[
\Delta(\boldsymbol{x})=f_1(\boldsymbol{x})^2\Delta_{d-1}(\boldsymbol{x})=f_1(\boldsymbol{x})^{2d-2}\prod_{1\leqslant i<j\leqslant d-1}(\alpha_i -\alpha_j)^2
\]
and in a similar way we find relations among the $d-1$ roots like those in \eqref{alphai}. Now we can split $\Sigma$ into finitely many subsets such that the $c_i$'s are fixed and that $f_0$ is either zero or non-zero for every point in a given subset. Arguing separately with each subset we may then assume we have $d$ (or $d-1$) values $c_i$ that do not depend on $P$.

We will handle these subsets in a different way depending on whether $f_0$ vanishes or not. We have already seen in the proof of Theorem \ref{main} how to proceed in the second case and we define a quasi-projective variety $W\subset\PP{n+4}$ just as before. On the other hand, if $f_0$ vanishes, the path is the same but we need to slightly modify the polynomials $A_1,\ldots,A_{d-1}$ and $B$ in an obvious way to deal with the fact that we have only $d-1$ roots. For $l=1,\ldots ,d-1$ we define
\begin{align*}
A_l'(Y_1,Y_2,Y_3,Y_4) & =\displaystyle{\sum_{1\leqslant i_1<\ldots <i_l\leqslant d-1}a_{i_1}(Y_1,Y_2,Y_3,Y_4)\cdots a_{i_l}(Y_1,Y_2,Y_3,Y_4)}\ \cdot\\
&\cdot\displaystyle{\prod_{1\leqslant j\leqslant d-1\; ,\; j\neq i_1,\ldots,i_l} b_j(Y_1,Y_2,Y_3,Y_4)}
\end{align*}
\[
B'(Y_1,Y_2,Y_3,Y_4)=\prod_{i=1}^{d-1} b_i(Y_1,Y_2,Y_3,Y_4).
\]

Then, we consider the projective variety $V'$ defined as the intersection of the zero loci of $f_0(x_0,\dots,x_n)$ and the $d-1$ polynomials
\[
B'(Y_1,Y_2,Y_3,Y_4)f_{l+1}(X_0,\ldots,X_n)-(-1)^{\, l} f_1(X_0,\ldots,X_n)A_l'(Y_1,Y_2,Y_3,Y_4)
\]
with $l$ ranging from $1$ to $d-1$. We define $U_0'$ exactly like $U_0$ in the proof of Theorem \ref{main} and we denote by $U_1'$ and $U_2'$, the set of points in $V$ with coordinates \mbox{$(z_0:\dots :z_n:y_1:\dots :y_4)$} such that, respectively, $B'(y_1,y_2,y_3,y_4)=0$ and $f_1(z_0,\dots ,z_n)=0$. As in the proof of the main theorem, we define a set $U':=U_0\cup U_1'\cup U_2'$ and its complement in $V$, the quasi-projective variety $W':=V'\setminus U'$. We notice that $W\cap W'=\varnothing$.

If we consider the projection $\pi\colon W\cup W'\to\PP{n}$ on the first $n+1$ coordinates, we observe that the subsets we have split $\Sigma$ in are contained either in $\pi(W)$ or in $\pi(W')$ and therefore $\Sigma\subset\pi(W\cup W')$. As in the proof of Theorem \ref{main}, we have $\pi\left( W\cap\{Y_2=Y_3\}\right )\subset T_0$ and, in a similar way, $\pi\left (W'\cap\{Y_2=Y_3\}\right )\subset T_1$. Remembering that $W\cap W'=\varnothing$ as well as $T_0\cap T_1=\varnothing$ we can conclude as follows:
\begin{align*}
\dim\overline{\Sigma} & \leqslant\dim\big(\pi(W)\cup\pi(W')\big)\\
&\leqslant\dim\Big(\big(\pi(W)\cup\pi(W')\big)\cap\{Y_2=Y_3\}\Big)+1\\
& = \dim\Big(\big(\pi(W)\cap\{Y_2=Y_3\}\big)\cup\big(\pi(W')\cap\{Y_2=Y_3\}\big)\Big)+1\\
& \leqslant\dim(T_0\cup T_1)+1.\qedhere
\end{align*}
\end{proof}\vspace{.3cm}

\noindent\emph{Low degrees -} We  now complete the proof of Theorem \ref{main} by taking into account the cases of $d=2$ and $d=3$. When $d=2$ we have two different roots $\alpha_1$ and $\alpha_2$ and we cannot apply results about $S$-unit equations: however we do not need them, since it is enough to use the trivial relations $\alpha_1=\alpha_1$ and $\alpha_2=\alpha_2$. Namely, we simply define the auxiliary polynomials $A_1(Y_1,Y_2)=Y_1+Y_2$ and $A_2(Y_1,Y_2)=Y_1 Y_2$; then we consider the variety $V\subset\PP{n+2}$ defined by the polynomials
\[
f_l(X_0,\ldots,X_n)-(-1)^{\, l} f_0(X_0,\ldots,X_n)A_l(Y_1,Y_2)\qquad l=1,2
\]
and the quasi-projective variety
\[
W:=V\setminus\big(\{f_0(X_0,\dots,X_n)=0\}\cup\{X_0=\dots =X_n=0\}.
\]
Subsequently, we consider the projection $\pi\colon W\to\PP n$ and everything will follow as in the proof of the main theorem: if $(x_0,\dots,x_n)\in\Sigma$ then $(x_0,\dots,x_n,\alpha_1,\alpha_2)\in W$ and the points in $\pi\big(W\cap\{Y_1=Y_2\}\big)$ satisfy the defining relations for $T_0$.

When $d=3$ the trivial relations considered above are no more sufficient to conclude, yet we lack the four different roots that enabled us to use results about the $S$-unit equation. If the hypersurface $\XX$ is defined in $\PP{n+1}$ by the polynomial
$$f(X_0,\dots,X_{n+1})=f_0X_{n+1}^3+f_1X_{n+1}^2+f_2X_{n+1}+f_3,$$where $f_i\in k[X_0,\dots,X_n]$ for i=0,1,2,3, let us suppose that $\deg f_0\geqslant 2$ and put $\delta:=\deg f_0-1$. We will introduce a subsidiary dimension and we consider the hypersurface $\mathcal Z\subset\PP{n+2}$ defined by
\[
g(X_0,\dots,X_n,Z,X_{n+1})=Z^\delta X_{n+1}^4+f_0X_{n+1}^3+f_1X_{n+1}^2+f_2X_{n+1}+f_3.
\]
We keep the notations previously introduced, adding a superscript $^\mathcal X$ or $^\mathcal Z$ when a definition is related to the hypersurface (or relative polynomial and projection map) considered. We approach the problem thinking that $H^\XX\simeq\PP n$, $H^\mathcal Z\simeq\PP{n+1}$ and $H^\XX\simeq H^\mathcal Z\cap\{Z=0\}$.

We arbitrarily choose a set of quasi-$S$-integral points $\Sigma$ for $H^\XX\setminus D^\XX$ and we must prove that the dimension of its Zariski closure is less than or equal to $\dim T_0^\XX+1$. For every point $(x_0:\dots:x_n)\in\Sigma\subset H^\XX$ we consider a point $(x_0:\dots:x_n:0)\in H^\mathcal Z$ and we denote the set of all these points by $\Sigma'$. Namely, we define$$\Sigma'=\{(x_0:\dots:x_n:0)\in H^\mathcal Z:(x_0:\dots:x_n)\in\Sigma\}.$$
It turns out that $\Sigma'$ is a set of quasi-$S$-integral points for $H^\mathcal Z\setminus\{\Delta^\mathcal Z=0\}$, for$$\Delta^\mathcal Z(x_0,\dots,x_n,0)=f_0(x_0,\dots,x_n)^2\cdot\Delta^\XX(x_0,\dots,x_n)$$and both factors on the right-hand term are non-zero because for every point $(x_0:\dots :x_n)\in\Sigma$ we have \mbox{$(x_0:\dots :x_n)\notin D^\XX$.} Now we parallel the proof of Corollary \ref{cordelta} to get that the dimension of the Zariski closure of $\Sigma'$ is less than or equal to $\dim T^\mathcal Z_1+1$. In fact, from the last displayed equation and denoting the roots of the polynomial $f$ by $\alpha_1,\alpha_2,\alpha_3$, we obtain
\begin{equation}\label{deltaz}
\begin{split}
\Delta^\mathcal Z(x_0,\dots,x_n,0)&=f_0(x_0,\dots,x_n)^6\prod_{1\leqslant i<j\leqslant 3} (\alpha_i-\alpha_j)^2\\
&=\prod_{1\leqslant i<j\leqslant 3} \big(f_0(x_0,\dots,x_n)(\alpha_i-\alpha_j)\big)^2
\end{split}
\end{equation}
which implies $(\alpha_i-\alpha_j)\in\OSU$ for every $i,j\in\{1,2,3\}$ such that $i\neq j$. This leads to a non-trivial fixed algebraic relation among the three roots and we outline the conclusion that follows similarly to the proof given for $d\geqslant 4$. In fact, $(\alpha_1-\alpha_2,\alpha_2-\alpha_3,\alpha_3-\alpha_1)$ is a non-degenerate solution of the $S$-unit equation $x_1+x_2+x_3=0$ and we can write $\alpha_3=(\alpha_1-\alpha_2)c+\alpha_1$ with $c$ in a finite set independent of the point chosen in $\Sigma$. Less auxiliary polynomials and only two variables are needed to define the variety $V$:
\[
a_1(Y_1,Y_2)=Y_1, \quad a_2(Y_1,Y_2)=Y_2,\quad a_3(Y_1,Y_2)=(Y_1-Y_2)c+Y_1
\]
\[
A_1(Y_1,Y_2)=a_1+a_2+a_3,\quad A_2(Y_1,Y_2)=a_1a_2+a_1a_3+a_2a_3
\]
\[
A_3(Y_1,Y_2)=a_1a_2a_3
\]
The variety $V$ is defined by the three polynomials
\[
f_l(X_0,\ldots,X_n)-(-1)^{\, l} f_0(X_0,\ldots,X_n)A_l(Y_1,Y_2)\quad l=1,2,3.
\]
and we define the variety $U$ as in the first part of this proof to obtain the quasi-projective variety $W$. As in the end of the proof of Corollary \ref{cordelta}, the dimension of the Zariski closure of the set $\Sigma'$ is less than or equal to $\dim T_1^\mathcal Z+1$. The sought conclusion follows observing that $\Sigma'\simeq\Sigma$ and $T^\mathcal Z_1\simeq T^\XX_0$.

We are left with the cases of $\deg f_0=0$ and $\deg f_0=1$ and we observe that the former has already a solution in \cite{zan}, since we can assume $f_0=1$ without loss of generality. If $\deg f_0=1$ we follow the proof given in this subsection for $\deg f_0\geqslant 2$ with the difference that the hypersurface $\mathcal Z\subset\PP{n+1}$ will be defined by the polynomial
\[
g(X_0,\dots,X_n,Z,X_{n+1})=ZX_{n+1}^4+f_0^2 X_{n+1}^3+f_0f_1X_{n+1}^2+f_0f_2X_{n+1}+f_0f_3.
\]
Everything goes as before with the exception of (\ref{deltaz}) that becomes
\begin{align*}\Delta^\mathcal Z(x_0,\dots,x_n,0)&=f_0(x_0,\dots,x_n)^8\Delta^\mathcal X(x_0,\dots,x_n)=\\&=f_0(x_0,\dots,x_n)^2\prod_{1\leqslant i<j\leqslant 3} \big(f_0(\alpha_i-\alpha_j)\big)^2\end{align*}
and the same conclusion follows.\qed\vspace{1cm}

\end{document}